\DeclareSymbolFont{bbold}{U}{bbold}{m}{n}
\DeclareSymbolFontAlphabet{\mathbbold}{bbold}
\newcommand{\R}{\mathbb{R}}
\newcommand{\1}{\mathbbold{1}}
\renewcommand{\epsilon}{\varepsilon}
\DeclareMathOperator{\calTlim}{\mathcal{T}{\text{-}}lim}
\DeclareMathOperator{\grad}{grad}
\newcommand\RprimeSXY{\rlap{$\phantom R'$}R_{S_{X+Y\mkern-1mu}}}
\newcommand\lowered{\rule{-0.7pt}{1.7ex}}
\newcommand{\Xs}{X_{\rm s}}
\providecommand{\dupa}[2]{\left\langle #1 \mkern1.5mu,\mkern1.5mu #2 \right\rangle}
\newcommand{\from}{\colon}
\let\phi\varphi
\let\leq\leqslant
\let\ge\geqslant
\let\geq\geqslant
\def\@row#1,{#1\@ifnextchar;{\@gobble}{&\@row}}
\def\@matrix{%
    \expandafter\@row\my@arg,;%
    \@ifnextchar({\\ \get@in@paren{\@matrix}}{\after@matrix}%
    }
\def\matrixtype#1#2#3{%
    \ifmmode\def\after@matrix{\end{#2}\right#3}%
    \else\def\after@matrix{\end{#2}\right#3$}$\fi
    \left#1\begin{#2}\get@in@paren{\@matrix}%
    }
\def\@column#1,{#1\@ifnextchar;{\@gobble}{\\ \@column}}
\newcommand\vect{}
\def\svect(#1){\left(\begin{smallmatrix}\@column#1,;\end{smallmatrix}\right)}
\def\vect{\get@in@paren{\@vect}}
\def\@vect{\left(\begin{matrix}\expandafter\@column\my@arg,;\end{matrix}\right)}
\def\get@in@paren#1({\def\my@arg{}\def\my@rest{}\def\after@get{#1}\get@arg}
\let\e@a\expandafter
\def\get@arg#1){\e@a\kl@test\my@rest#1(;}
\def\kl@test#1(#2;{\e@a\def\e@a\my@arg\e@a{\my@arg#1}%
                   \ifx:#2:\let\my@exec\after@get
                   \else\let\my@exec\get@arg
                        \e@a\def\e@a\my@arg\e@a{\my@arg(}%
                        \def@rest#2;%
                   \fi\my@exec}
\def\def@rest#1(;{\def\my@rest{#1\kl@zu}}
\def\kl@zu{)}
\mathchardef\capitaly\mathcode`\Y
\newcommand\testfirsttoken{\ifx\myfirsttoken$\expandafter\getsecondtoken\fi}
\def\getsecondtoken${\futurelet\mysecondtoken\testsecondtoken}
\newcommand\testsecondtoken{\ifx\mysecondtoken.\mkern-2mu\fi
                            \ifx\mysecondtoken,\mkern-2mu$\else$\fi}
\newcommand\MyPairedDelimiter{%
  \@ifstar{\My@Paired@Delimiter{{}}}
          {\My@Paired@Delimiter{}}%
}
\newcommand\My@Paired@Delimiter[4]{%
  \newcommand#2{%
    \@ifstar{\start@PD{#1}{\delimitershortfall=-1sp}{#3}{#4}}
            {\start@PD{#1}{}{#3}{#4}}%
  }%
}
\newcommand\start@PD[5]{%
  #1\mathopen{\mathpalette\put@delim@helper{\put@delim{#2}{#3}{.}{#5}}}%
  #5%
  \mathclose{\mathpalette\put@delim@helper{\put@delim{#2}{.}{#4}{#5}}}%
}
\newcommand\put@delim@helper[2]{%
  \hbox{$\m@th\nulldelimiterspace=0pt #2#1$}%
}
\newcommand\put@delim[5]{%
  \setbox\z@\hbox{$\m@th#5{#4}$}%
  \setbox\tw@\null
  \ht\tw@\ht\z@ \dp\tw@\dp\z@
  #1#5%
  \left#2\box\tw@\right#3%
}
\MyPairedDelimiter*{\abs}{\lvert}{\rvert}
\MyPairedDelimiter*{\norm}{\lVert}{\rVert}
\MyPairedDelimiter{\set}{\{}{\}}
\theoremstyle{plain} 
\newtheorem{theorem}{Theorem}[section]
\newtheorem{corollary}[theorem]{Corollary}
\newtheorem{lemma}[theorem]{Lemma}
\theoremstyle{definition}
\newtheorem{example}[theorem]{Example}
\newtheorem{hypothesis}[theorem]{Hypothesis}
\newtheorem*{definition}{Definition}
\newtheorem{remark}[theorem]{Remark}
\renewcommand{\labelenumi}{{\rm (\alph{enumi})}}
\begin{document}

\medmuskip=4mu plus 2mu minus 3mu
\thickmuskip=5mu plus 3mu minus 1mu
\belowdisplayshortskip=9pt plus 3pt minus 5pt

\title{A note on perturbations of $C_0$-semigroups}

\author{Christian Seifert, Hendrik Vogt and Marcus Waurick}

\date{\today}

\maketitle

\begin{abstract}
This article deals with a variation of constants type inequality for semigroups acting consistently on a scale of Banach spaces. This inequality can be characterized by a corresponding (easy to verify) inequality for their generators. The results have applications to heat kernel estimates and provide a unified perspective to estimates of these type. Moreover, bi-continuous semigroups can be treated as well.
\end{abstract}

Keywords: $C_0$-semigroups, perturbed semigroups.

MSC 2010: 35B09; 35B20; 35K08

\section{Introduction}

In this note, we elaborate on a perturbation result for $C_0$-semigroups. More precisely, we shall further elaborate on a heat kernel estimate given in \cite{bgk2009}, which already has been extended in \cite{w2011}, \cite{SeifertWingert2014} and complemented in \cite{SeifertWaurick2016}. The latter two references provided an analytic approach to the heat kernel type estimates, by means of showing suitable estimates for positive $C_0$-semigroups acting in suitable $L_p$-spaces. 

The main contribution in \cite{SeifertWaurick2016} on this matter was to understand that the desired estimates are in fact consequences of variation of a constants type inequality for perturbations of $C_0$-semigroups. Providing this different point of view, the results, however, did not directly apply to the situation outlined in \cite{SeifertWingert2014}. The main difficulty, which has not been overcome in \cite{SeifertWaurick2016} was that the considered semigroups were acting in different ($L_p$-)spaces, including the non-reflexive space $L_\infty$, where the semigroups treated are \emph{not} strongly continuous anymore.

The main objective of the present work is to resolve this issue and to provide a unified persepective to the results in \cite{SeifertWingert2014} and \cite{SeifertWaurick2016} and, thus, to present a general perspective to these type of perturbation results particularly in non-reflexive Banach spaces. We shall furthermore show that the developed machinery can be applied to bi-continuous semigroups (see \cite{Kuehnemund2001}), which could not be treated within previous frameworks. 

In fact, one can view the main result of the present exposition (Theorem~\ref{thm:main}) as a characterization of a variation of constants type \emph{inequality} for semigroups $S$ and $T$, where $T$ may be considered as a perturbation of $S$, in the case where both $S$ and $T$ are consistently acting on \emph{different} Banach spaces. The unperturbed semigroup $S$ is supposed to satisfy only a rather weak continuity property. This necessitates the introduction of a weak Laplace transform, which in turn coincides with a resolvent of the corresponding generator, if $S$ was strongly continuous.

Although the main focus of applications will be positive semigroups acting on some vector space with compatible order structure, the main result is formulated without the usage of any order structure. Thus, it might also bare applications to a more general context than positive semigroups.

In Section~\ref{sec:semigroups}, we study the weak Laplace transform mentioned above and address some of its properties. The main result can be found in Section~\ref{sec:main_result}. In that section, we shall also elaborate on validating the assumptions of our main result. This article is concluded with a series of applications: We shall employ the main theorem for perturbation results for bi-continuous semigroups; we frame the main results of \cite{SeifertWingert2014,SeifertWaurick2016} into the setting outlined in this paper. Finally, we will treat a perturbation result for delay equations on a scale of $L_p$-spaces, which lead to Banach lattice-valued semigroups -- a class of problems that were out of reach for earlier results.

\section{On Semigroups}\label{sec:semigroups}

We say that $S$ is a \emph{semigroup} on a Banach space $Y$ if $S\colon [0,\infty)\to L(Y)$ satisfies $S(0)=I$ and $S(t+s)=S(t)S(s)$ for all $s,t\in [0,\infty)$.

\begin{lemma}
\label{lem:exponential_bound}
  Let $Y$ be a Banach space, and let $E\subseteq Y'$ be a closed norming subspace for~$Y$. Let $S$ be a semigroup on $Y$ such that
  $[0,\infty)\ni t\mapsto S(t)\in L_{\sigma(Y,E)}(Y)$ is continuous, i.e.,
  \[[0,\infty)\ni t\mapsto \dupa{S(t)y}{e}_{Y\times E}\]
  is continuous for all $y\in Y$, $e\in E$.
  Then there exist $M\geq 1$, $\omega\in\R$, such that $\norm{S(t)}\leq Me^{\omega t}$ for all $t\geq 0$.
\end{lemma}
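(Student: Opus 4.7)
The plan is to mimic the standard proof that a $C_0$-semigroup is exponentially bounded, but replacing strong continuity by the weaker $\sigma(Y,E)$-continuity assumed here. Since the norming property of $E$ gives $\norm{y}=\sup_{e\in E,\,\norm{e}\le 1}\abs{\dupa{y}{e}}$, norm boundedness in $Y$ can be tested against elements of $E$, and this is the bridge that lets two applications of the uniform boundedness principle do the work.

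First I would fix $y\in Y$ and consider, for each $t\in[0,1]$, the linear functional $T_t\from E\to\K$ given by $T_t(e)=\dupa{S(t)y}{e}$. By the assumed $\sigma(Y,E)$-continuity, for each $e\in E$ the map $t\mapsto T_t(e)$ is continuous on the compact interval $[0,1]$, hence bounded. Since $E$ is a closed (therefore Banach) subspace of $Y'$, the Banach--Steinhaus theorem applied to $\set{T_t:t\in[0,1]}\subseteq E'$ yields $\sup_{t\in[0,1]}\norm{T_t}_{E'}<\infty$. But $\norm{T_t}_{E'}=\sup_{e\in E,\,\norm{e}\le 1}\abs{\dupa{S(t)y}{e}}=\norm{S(t)y}_Y$ because $E$ is norming. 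Thus $\sup_{t\in[0,1]}\norm{S(t)y}_Y<\infty$ for every $y\in Y$.

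Next I would apply the uniform boundedness principle once more, this time to the family $\set{S(t):t\in[0,1]}\subseteq L(Y)$, which we have just shown is pointwise bounded. This gives $M\coloneqq\sup_{t\in[0,1]}\norm{S(t)}<\infty$, and from $S(0)=I$ we must have $M\ge 1$. For arbitrary $t\ge 0$, writing $t=n+r$ with $n\in\N_0$ and $r\in[0,1)$, the semigroup law yields $S(t)=S(1)^n S(r)$, whence
\[
  \norm{S(t)}\le \norm{S(1)}^n\norm{S(r)}\le M^{n+1}\le M\e^{\omega t}
\]
with $\omega\coloneqq\log M\ge 0$, which is the desired bound.

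The only step that requires any real thought is the first passage from weak-type boundedness to norm boundedness; once one realises that the norming property identifies $\norm{T_t}_{E'}$ with $\norm{S(t)y}_Y$, the rest is a verbatim repetition of the classical $C_0$-argument, and the non-strong-continuity of $S$ never intrudes. No separate measurability or integrability hypothesis is needed, since continuity on $[0,1]$ (even just against each fixed $e$) is already enough to feed Banach--Steinhaus.
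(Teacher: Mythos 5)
Your proof is correct and follows essentially the same route as the paper's: two applications of the uniform boundedness principle, with the closedness of $E$ making Banach--Steinhaus applicable on $E$ and the norming property converting $E'$-boundedness of $S(t)y$ into $Y$-boundedness. The only cosmetic difference is that you establish boundedness directly on $[0,1]$, whereas the paper argues local boundedness at $0$ along sequences $t_n\to0$ and then invokes the standard argument.
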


\begin{proof}
  The proof is almost the same as in the case of $C_0$-semigroups; an additional argument is needed when it comes to the local boundedness at zero. For the latter, we let $(t_n)_n$ in $(0,\infty)$, $t_n\to 0$. Then
  \[\dupa{S(t_n)y}{e}_{Y\times E} \to \dupa{y}{e}_{Y\times E}\]
  for all $y\in Y$, $e\in E$, and by the uniform boundedness principle, $(S(t_n)y)_n$ is bounded in~$E'\mkern-2mu$, for all $y\in Y$.
  Note that this sequence is also a sequence in $Y$, and that it is bounded in $Y$ since $E$ is norming.
  Another application of the uniform boundedness principle yields boundedness of $(S(t_n))_n$. Thus, $S$ is locally bounded at zero. The remainder of the proof is standard.
\end{proof}

\begin{definition}
  Let $Y$ be a Banach space, and let $E\subseteq Y'$ be a closed norming subspace for~$Y$. Let $S$ be a semigroup on $Y$ such that
  $[0,\infty)\ni t\mapsto S(t)\in L_{\sigma(Y,E)}(Y)$ is continuous.
  Let $\lambda\in\R$ be sufficiently large. Then we define $R_S(\lambda)\in L(Y,E')$ by
  \[\dupa{R_S(\lambda) y}{e}_{E'\times E} = \int_0^\infty e^{-\lambda t} \dupa{S(t)y}{e}_{Y\times E}\,dt \quad(y\in Y,\ e\in E).\]
\end{definition}

Note that the right-hand side indeed defines an operator $R_S(\lambda)\in L(Y,E')$, since by Lemma~\ref{lem:exponential_bound} we have
\[\int_0^\infty \abs{e^{-\lambda t} \dupa{S(t)y}{e}_{Y\times E}}\,dt \leq \int_0^\infty e^{-\lambda t} Me^{\omega t} \norm{y}_Y\norm{e}_E\,dt = \frac{M}{\lambda-\omega} \norm{y}_Y\norm{e}_E\]
for $\lambda>\omega$.

\begin{remark}\label{dual-rem}
  \begin{enumerate}
    \item
      Assume that $S$ is strongly continuous on $Y$. Then we can choose $E:=Y'$, and we obtain $R_S(\lambda) = \kappa_Y (\lambda-A_S)^{-1}$,
      where $\kappa_Y\from Y\to Y''$ is the canonical embedding and $A_S$ is the generator of $S$ in $Y$.
    \item
      Let $Y$ be a dual space. Then we choose $E$ to be a predual of $Y$, i.e.\ $E'=Y$, and we identify $E$ with $\kappa_E(E)\subseteq E'' = Y'$, where $\kappa_E\from E\to E''$ is the canonical embedding.
      Then $R_S(\lambda)\in L(Y)$. 
      We assume that $E$ is invariant under the dual semigroup $S' = (S(t)')_{t\ge0}$.
      The continuity of $t\mapsto \dupa{S(t)y}{e}_{Y\times E} = \dupa{y}{S(t)'e}_{Y\times E}$ for all $y\in Y$, $e\in E$ means that $(S(t)'|_{E})_{t\ge0}$ is weakly continuous and hence a $C_0$-semigroup on $E$ by \cite[Theorem~I.5.8]{EngelNagel}. Let $A$ be the generator.
      Then $R_S(\lambda) = \bigl((\lambda-A)^{-1}\bigr)\lowered'$ for sufficiently large $\lambda$.
  \end{enumerate}
\end{remark}

In the proof of our main theorem we will need the following convergence result.
\begin{lemma}
\label{lem:resolvent_convergence}
  Let $Y$ be a Banach space, and let $E\subseteq Y'$ be a closed norming subspace for~$Y$. Let $S$ be a semigroup on $Y$ with
  $[0,\infty)\ni t \mapsto S(t)\in L_{\sigma(Y,E)}(Y)$ continuous.
  Let $y\in Y$, $e\in E$. Then
  \[\dupa{\lambda R_S(\lambda)y}{e}_{E'\times E} \to \dupa{y}{e}_{Y\times E}\quad(\lambda\to\infty).\]
\end{lemma}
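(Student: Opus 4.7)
The plan is to prove this by the standard resolvent approximation trick: rescale the integral so that the exponential weight is independent of $\lambda$, then apply dominated convergence.

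Concretely, I would start from the definition
\[
  \dupa{\lambda R_S(\lambda)y}{e}_{E'\times E} = \lambda\int_0^\infty e^{-\lambda t}\dupa{S(t)y}{e}_{Y\times E}\,dt
\]
and substitute $s=\lambda t$ to obtain
\[
  \dupa{\lambda R_S(\lambda)y}{e}_{E'\times E} = \int_0^\infty e^{-s}\dupa{S(s/\lambda)y}{e}_{Y\times E}\,ds.
\]
Since $\int_0^\infty e^{-s}\,ds = 1$, the claim reduces to showing that the integrand converges in $L^1$ (or pointwise with a dominating function) to $e^{-s}\dupa{y}{e}_{Y\times E}$.

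Pointwise convergence for every fixed $s\geq 0$ is immediate from the assumed $\sigma(Y,E)$-continuity of $t\mapsto S(t)y$ at $t=0$. For the dominating function I would invoke Lemma~\ref{lem:exponential_bound}: there are $M\geq 1$ and $\omega\in\R$ with $\norm{S(t)}\leq Me^{\omega t}$, hence
\[
  \abs{e^{-s}\dupa{S(s/\lambda)y}{e}_{Y\times E}} \leq Me^{-s}e^{\omega s/\lambda}\norm{y}_Y\norm{e}_E.
\]
Restricting to $\lambda\geq \lambda_0$ for some $\lambda_0>\max\set{0,2\omega}$, this is bounded by $Me^{-s/2}\norm{y}_Y\norm{e}_E$, which is integrable on $[0,\infty)$. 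Dominated convergence then yields the desired limit.

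There is no real obstacle here; the only thing to be slightly careful about is making the dominating bound uniform in $\lambda$ (for large $\lambda$), which is handled by the restriction $\lambda\geq \lambda_0$ above. This is harmless because the conclusion concerns $\lambda\to\infty$.
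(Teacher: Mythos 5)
Your proof is correct and follows exactly the same route as the paper: the substitution $s=\lambda t$, pointwise convergence from $\sigma(Y,E)$-continuity at $0$, and dominated convergence with a bound from Lemma~\ref{lem:exponential_bound}. Your explicit uniform dominating function for $\lambda\geq\lambda_0$ just spells out what the paper leaves implicit.
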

Put differently, Lemma~\ref{lem:resolvent_convergence} yields the weak$^*$-convergence $\dupa{\lambda R_S(\lambda)\cdot}{e}_{E'\times E} \to e$ in $Y'$, for all $e\in E$.

\begin{proof}
  We have 
  \begin{align*}
    \dupa{\lambda R_S(\lambda)y}{e}_{E'\times E} & = \int_0^\infty \lambda e^{-\lambda t} \dupa{S(t)y}{e}_{Y\times E}\,dt
    = \int_0^\infty e^{-s} \dupa{S(\tfrac{s}{\lambda})y}{e}_{Y\times E}\,ds \\
    & \to \int_0^\infty e^{-s}\dupa{y}{e}_{Y\times E}\,ds
    = \dupa{y}{e}_{Y\times E} \quad(\lambda\to\infty).
  \end{align*}
  Indeed, as $S$ is exponentially bounded by Lemma~\ref{lem:exponential_bound}, the integral is finite for large $\lambda$, and the dominated convergence theorem applies.
\end{proof}

\section{The perturbation result}
\label{sec:main_result}

In the following, we shall come to our main result on semigroups. For stating the main theorem we need the following notions.

\begin{definition}
  Let $X,Y$ be Banach spaces over $\R$ and $V$ a Hausdorff topological vector space over~$\R$.
  We say that the couple $(X,Y)$ is \emph{$V$-compatible (as Banach spaces)}, provided $X,Y\hookrightarrow V$ continuously.
\end{definition}
We will drop the letter $V$ and simply write \emph{compatible} if there is no risk of confusion.

\begin{remark}
\label{rem:Schnitt_Summe}
  Let $X,Y$ be Banach spaces, $(X,Y)$ compatible. Then $X\cap Y:=\set{v\in V;\; v\in X,\linebreak[1]\ v\in Y}$ 
  endowed with the norm $v\mapsto \max\{\norm{v}_X,\norm{v}_Y\}$ 
  and $X+Y:=\set{v\in V;\; \exists\, x\in X,\linebreak[1]\ y\in Y: v=x+y}$ endowed with the norm
  $x+y\mapsto \inf\{\|x_1\|_X+\|y_1\|_Y;\; x_1\in X,\linebreak[1]\ y_1\in Y,\linebreak[1]\ x+y=x_1+y_1\}$ are Banach spaces as well, see \cite[Lemma 2.3.1]{BerghLoefstroem1976}
\end{remark}

Next we discuss the dual space of $X+Y$.

\begin{remark} 
\label{rem:duals_banach_space}
  Let $X,Y$ be Banach spaces, $(X,Y)$ compatible, and assume that $X\cap Y$ is dense in both $X$ and $Y$.
  Then $(X+Y)'=X'\cap Y'$ by \cite[Theorem 2.7.1]{BerghLoefstroem1976}.
\end{remark}

\begin{lemma}
\label{lem:continuous_in_X'_Y'}
  Let $X,Y,W$ be Banach spaces, $(X,Y)$ compatible, and let $T\from X+Y\to W$ be linear. Then
  \[T|_X\in L(X,W),\ T|_Y\in L(Y,W) \Longleftrightarrow T \in L(X+Y,W).\]
\end{lemma}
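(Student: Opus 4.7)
The statement is a straightforward equivalence that falls out of the definition of the norm on $X+Y$ (Remark~\ref{rem:Schnitt_Summe}); I would handle the two implications separately, neither requiring anything beyond routine bookkeeping.

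For the implication $\Leftarrow$, the plan is to observe that the inclusions $X\hookrightarrow X+Y$ and $Y\hookrightarrow X+Y$ are continuous with norm at most $1$: for $x\in X$, choosing the decomposition $x = x + 0$ in $X+Y$ shows $\norm{x}_{X+Y}\leq \norm{x}_X$, and similarly for $y\in Y$. Composing these inclusions with $T\in L(X+Y,W)$ gives $T|_X\in L(X,W)$ and $T|_Y\in L(Y,W)$.

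For the implication $\Rightarrow$, I would set $C:=\max\bigl\{\norm{T|_X}_{L(X,W)},\norm{T|_Y}_{L(Y,W)}\bigr\}$ and fix $z\in X+Y$. Given any decomposition $z = x + y$ with $x\in X$, $y\in Y$, linearity of $T$ on $X+Y$ (applied to the elements $x,y\in X+Y$) yields
\[
  \norm{Tz}_W = \norm{Tx + Ty}_W \leq \norm{T|_X}\,\norm{x}_X + \norm{T|_Y}\,\norm{y}_Y \leq C\bigl(\norm{x}_X+\norm{y}_Y\bigr).
\]
Taking the infimum over all such decompositions, by the definition of the norm on $X+Y$, gives $\norm{Tz}_W\leq C\norm{z}_{X+Y}$, hence $T\in L(X+Y,W)$.

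The only place one has to be slightly careful is the linearity step in the $\Rightarrow$ direction, which is legitimate only because $T$ is assumed linear on the whole of $X+Y$ (so that $Tz=Tx+Ty$ regardless of which representation $z=x+y$ one chooses); the argument would not go through from mere continuity of $T|_X,T|_Y$ without this global linearity hypothesis. Apart from that, there is no genuine obstacle.
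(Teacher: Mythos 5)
Your proof is correct and follows essentially the same route as the paper: the forward direction via the estimate $\norm{Tz}_W\leq\max\{\norm{T|_X},\norm{T|_Y}\}(\norm{x}_X+\norm{y}_Y)$ followed by taking the infimum over decompositions, and the converse by composing $T$ with the continuous canonical embeddings of $X$ and $Y$ into $X+Y$. Your closing remark on why global linearity of $T$ is needed (so that $Tz=Tx+Ty$ is independent of the chosen representation) is a sound observation that the paper leaves implicit.
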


\begin{proof}
  Assume that $T|_X\in L(X,W)$ and $T|_Y\in L(Y,W)$.
  Given $v=x+y\in X+Y$, we estimate
  \begin{align*}
    \norm{T(v)} & = \norm{T(x+y)} \leq \norm{T(x)} + \norm{T(y)} \\ 
    & \leq \norm{T}_{L(X,W)} \norm{x}_X + \norm{T}_{L(Y,W)} \norm{y}_Y 
    \leq \max\{\norm{T}_{L(X,W)},\norm{T}_{L(Y,W)}\} \bigl(\norm{x}_X + \norm{y}_Y\bigr).
  \end{align*}
  Taking the infimum over all $x\in X$, $y\in Y$ such that $v=x+y$ we obtain $T\in L(X+Y,W)$.
  
  Conversely, let $T \in L(X+Y,W)$. The canonical embedding $j\from X\to X+Y$, $j(x):=x+0$ is linear and continuous. 
  Since $T|_X = T\circ j$ we have $T|_X\in L(X,W)$, and by symmetry, $T|_Y\in L(Y,W)$. 
\end{proof}

\begin{remark}\label{rem:agree}
  Let $X,Y$ be Banach spaces, $(X,Y)$ compatible.
  Then by definition, the dual pairings $\dupa{\cdot}{\cdot}_{X\times X'}$ and $\dupa{\cdot}{\cdot}_{Y\times Y'}$ agree on $(X\cap Y) \times (X+Y)'$.
  Indeed, for $x\in X\cap Y$ and $v'\in (X+Y)'$ we obtain
  \begin{align*}
    \dupa{x}{v'|_X}_{X\times X'} = v'|_X(x) = v'|_Y(x) = \dupa{x}{v'|_Y}_{Y\times Y'}.
  \end{align*}
  With a slight abuse of notation, we shall write
  \[
    \dupa{x}{v'}_{X\times X'} \coloneqq  \dupa{x}{v'|_X}_{X\times X'}\text{ and } \dupa{x}{v'}_{Y\times Y'} \coloneqq  \dupa{x}{v'|_Y}_{Y\times Y'}
  \]
  and obtain
  \[
    \dupa{x}{v'}_{X\times X'} = \dupa{x}{v'}_{Y\times Y'}.
  \]
\end{remark}

\begin{definition}
  Let $X,Y$ be Banach spaces, $(X,Y)$ compatible, and let $B_X\in L(X)$, $B_Y\in L(Y)$. Then $B_X$ and $B_Y$ are \emph{consistent} if $B_X u = B_Y u$ for all $u\in X\cap Y$.
  Let $T_X$ and $T_Y$ be semigroups on $X$ and $Y$, respectively. We say that $T_X$ and $T_Y$ are \emph{consistent} if $T_X(t)$ and $T_Y(t)$ are consistent for all $t\geq 0$.
\end{definition}

\begin{remark}
  \begin{enumerate}
    \item
      If $T_X$ and $T_Y$ are consistent semigroups on $X$ and $Y$, respectively, then we will also consider the semigroup $T_{X+Y}$ on $X+Y$ determined by
      $T_{X+Y}(t)|_X = T_X(t)$ and $T_{X+Y}(t)|_Y = T_Y(t)$ for all $t\geq 0$; cf.\ Lemma~\ref{lem:continuous_in_X'_Y'}.
    \item
      If $X\cap Y$ is not dense in $X$ and $Y$, then $T_X$ does not need to determine the consistent semigroup $T_Y$ uniquely, and vice versa.
      In applications (see e.g.~\cite{SeifertWingert2014} or the next section, where $X=L_2$, $Y=L_\infty$ for a measure space for which $L_\infty\nsubseteq L_2$), $X\cap Y$ `often' is dense in $X$, but not in $Y$.
  \end{enumerate}
\end{remark}

In view of Lemma~\ref{lem:continuous_in_X'_Y'} we use the following notation:
given a subspace $E \subseteq Y'$, we write $(X+Y)'\cap E$ for the set of all $v'\in(X+Y)'$ satisfying $v'|_Y\in E$.

\begin{remark}\label{rem:w-d}
  Let $X,Y$ be Banach spaces, $(X,Y)$ compatible, and let $E\subseteq Y'$ be a closed norming subspace for~$Y$.
  Let $S$ be a $C_0$-semigroup on $X$ with generator $A_S$, and assume that there exists a semigroup $S_Y$ on $Y$ consistent with $S$, $\dupa{S_Y(\cdot)y}{e}_{Y\times E}$ continuous for all $y\in Y$, $e\in E$.
  Based on the operator $R_{S_Y}(\lambda) \in L(Y,E')$ discussed in Section~\ref{sec:semigroups},
  we define a `dual resolvent' $\RprimeSXY(\lambda) \colon (X+Y)'\cap E \to (X+Y)'$ as follows.
  Let $v'\in(X+Y)'\cap E$.
  We want to define
  \begin{alignat}{2}
    \dupa{\RprimeSXY(\lambda)v'}{x}_{(X+Y)'\times(X+Y)} &:= \dupa{(\lambda-A_S)^{-1}x}{v'|_X}_{X\times X'} \quad && (x\in X), \nonumber \\
    \dupa{\RprimeSXY(\lambda)v'}{y}_{(X+Y)'\times(X+Y)} &:= \dupa{R_{S_Y}(\lambda)y}{v'|_Y\strut}_{E'\times E} \quad && (y\in Y). \label{R'onY}
  \end{alignat}
  For this we need that the two definitions to coincide on $X\cap Y$.
  Indeed, for $x\in X\cap Y$ we obtain with Remark~\ref{rem:agree}
  \begin{align*}
    \dupa{R_{S_Y}(\lambda)x}{v'|_Y}_{E'\times E} & = \int_0^\infty e^{-\lambda t} \dupa{S_Y(t)x}{v'|_Y}_{Y\times E}\,dt
    = \int_0^\infty e^{-\lambda t} \dupa{S(t)x}{v'|_X}_{X\times X'}\,dt\\
    & = \dupa{\int_0^\infty e^{-\lambda t} S(t)x\,dt }{v'|_X}_{X\times X'}
    = \dupa{(\lambda-A_S)^{-1}x}{v'|_X}_{X\times X'}.
  \end{align*}
  We note that $(\RprimeSXY(\lambda)v')|_X = \bigl((\lambda-A_S)^{-1}\bigr)\lowered'(v'|_X)$.
  Moreover, for $v'\in (X+Y)'\cap E$ and $z\in (X+Y)$ we have
  \[\dupa{\RprimeSXY(\lambda)v'}{z}_{(X+Y)'\times(X+Y)} = \int_0^\infty e^{-\lambda t} \dupa{v'}{S_{X+Y}(t) z}_{(X+Y)'\cap E\times (X+Y)}\,dt.\]
\end{remark}

We gather the set of assumptions needed in the our main result, Theorem~\ref{thm:main} below:
\begin{hypothesis}\label{hyp:mainresult}
  Let $X,Y,Z$ be Banach spaces, $V$ a Hausdorff topological vector space. 
  We assume that
  \begin{enumerate}\renewcommand{\labelenumi}{{\rm(\roman{enumi})}}
   \item $(X,Y)$ and $(X,Z)$ are $V$-compatible;
   \item $S$ is a $C_0$-semigroup on $X$ (with generator $A_S$);
   \item $T$ is a $C_0$-semigroup on $X$ (with generator $A_T$);
   \item there exists a $C_0$-semigroup $T_Z$ on $Z$ consistent with $T$;
   \item $E\subseteq Y'$ is a closed, norming subspace;
   \item there exists a semigroup $S_Y$ on $Y$ consistent with $S$ such that $\dupa{S_Y(\cdot)y}{e}_{Y\times E}$ is continuous for all $y\in Y$, $e\in E$.
  \end{enumerate}
\end{hypothesis}
Then we can define the semigroup $S_{X+Y}$ and the `dual resolvent' $\RprimeSXY$ as above.

\begin{theorem}\label{thm:main}
\label{thm:main3} Assume Hypothesis~\ref{hyp:mainresult}. 
  Let $L\subseteq (X+Y)'\cap E$, $L$ invariant under $(S_{X+Y})'$ and under $\lambda \RprimeSXY(\lambda)$,
  $K\subseteq X\cap Z$, $K$ invariant under $T$ and under $\lambda(\lambda-A_T)^{-1}$ for all sufficiently large $\lambda>0$.
  Let $B\in L(Z,Y)$. 
  Then the following are equivalent:
  \begin{enumerate}
    \item
      For all $t\geq 0$ and all $ x \in K$, $v'\in L$ we have
      \[\dupa{T(t)x}{v'}_{X\times X'} \leq \dupa{S(t)x}{v'}_{X\times X'} + \int_0^t \dupa{S_Y(t-s)BT_Z(s)x}{v'}_{Y\times E}\, ds.\]
    \item
      For all sufficiently large $\lambda\in\R$ and all $ x\in K$, $v'\in L$ we have
      \[\dupa{(\lambda-A_T)^{-1}x}{v'}_{X\times X'} \leq \dupa{(\lambda-A_S)^{-1}x}{v'}_{X\times X'} + \dupa{R_{S_Y}(\lambda)B(\lambda-A_{T_Z})^{-1}x}{v'}_{E'\times E}.\]
    \item 
      For all $u\in D(A_T)\cap K$ and all $v'\in L\cap D(A_S')$ we have
      \[\dupa{A_T u}{v'}_{X\times X'} \leq \dupa{u}{A_S'(v'|_X)}_{X\times X'} + \dupa{Bu}{v'}_{Y\times E}.\]
  \end{enumerate}
\end{theorem}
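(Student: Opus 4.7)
The plan is to prove (a)$\Rightarrow$(b)$\Rightarrow$(c)$\Rightarrow$(a).

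For (a)$\Rightarrow$(b), I would apply $\int_0^\infty e^{-\lambda t}\,dt$ to both sides of the inequality in~(a). Lemma~\ref{lem:exponential_bound} together with the $C_0$-property of $T$ and $T_Z$ ensures absolute convergence of all three integrals for sufficiently large $\lambda$, and Fubini factors the Laplace transform of the convolution into $\dupa{R_{S_Y}(\lambda)B(\lambda-A_{T_Z})^{-1}x}{v'|_Y}_{E'\times E}$, which is exactly the third term of~(b).

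For (b)$\Rightarrow$(c), fix $u\in D(A_T)\cap K$ and $v'\in L$ with $v'|_X\in D(A_S')$, take $x:=u$ in~(b), multiply by $\lambda^2$, subtract $\lambda\dupa{u}{v'|_X}_{X\times X'}$ from both sides, and let $\lambda\to\infty$. The identity $\lambda(\lambda-A_T)^{-1}u-u=(\lambda-A_T)^{-1}A_T u$ (valid as $u\in D(A_T)$) turns the LHS into $\dupa{\lambda(\lambda-A_T)^{-1}A_T u}{v'|_X}\to\dupa{A_T u}{v'|_X}$ by the Yosida property on $X$. Passing the corresponding RHS term to the adjoint, it becomes $\dupa{u}{\lambda(\lambda-A_S')^{-1}A_S' v'|_X}$, which converges to $\dupa{u}{A_S' v'|_X}$ by Yosida in $X'$ tested against the fixed $u\in X$. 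For the third term, write $\lambda^2 R_{S_Y}(\lambda)B(\lambda-A_{T_Z})^{-1}u=\lambda R_{S_Y}(\lambda)\bigl[B\lambda(\lambda-A_{T_Z})^{-1}u\bigr]$; since $u\in Z$ the bracket converges to $Bu$ in $Y$, and combining this with the uniform bound $\|\lambda R_{S_Y}(\lambda)\|_{L(Y,E')}\leq C$ and Lemma~\ref{lem:resolvent_convergence} gives convergence to $\dupa{Bu}{v'|_Y}_{Y\times E}$, yielding~(c).

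For (c)$\Rightarrow$(a), first establish~(a) under the additional regularity $x\in K\cap D(A_T)$ and $v'|_X\in D(A_S')$. Define $\phi(s):=\dupa{T(s)x}{S(t-s)'v'|_X}_{X\times X'}$ for $s\in[0,t]$. Since $s\mapsto T(s)x$ is $X$-norm differentiable with derivative $A_T T(s)x$, and $s\mapsto S(t-s)'v'|_X$ is weak$^*$-differentiable in $X'$ on $D(A_S')$ with derivative $-S(t-s)'A_S'v'|_X$, a mixed product rule (norm on one factor, weak$^*$ on the other) yields $\phi'(s)=\dupa{A_T T(s)x}{S(t-s)'v'|_X}-\dupa{T(s)x}{A_S'S(t-s)'v'|_X}$. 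Apply~(c) to $u:=T(s)x\in D(A_T)\cap K$ (using $T$-invariance of both $K$ and $D(A_T)$) and test functional $w':=S_{X+Y}(t-s)'v'\in L$ with $w'|_X=S(t-s)'v'|_X\in D(A_S')$ ($L$ is $(S_{X+Y})'$-invariant, and $D(A_S')$ is $S(t)'$-invariant); the pairing relation from Remark~\ref{rem:w-d} recasts the resulting bound as $\phi'(s)\leq\dupa{S_Y(t-s)BT(s)x}{v'|_Y}_{Y\times E}$. Integration over $[0,t]$ and consistency $T(s)x=T_Z(s)x$ for $x\in K\subseteq X\cap Z$ yield~(a) in this restricted setting.

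To drop the regularity, I would approximate $v'\in L$ by $v'_n:=n\,\RprimeSXY(n)v'\in L$ (whose $X$-restriction is $n(n-A_S')^{-1}v'|_X\in D(A_S')$) and $x\in K$ by $x_m:=m(m-A_T)^{-1}x\in K\cap D(A_T)$, using the invariance hypotheses on $L$ and $K$; Lemma~\ref{lem:resolvent_convergence} together with Yosida convergence handles the passage to the limit in the three dual pairings, while uniform bounds on $\|v'_n|_Y\|_E$ and $\|x_m\|_X$ provide the domination needed inside the convolution integral. The main obstacle is the mixed product rule in (c)$\Rightarrow$(a): one must combine norm differentiability of $T(\cdot)x$ with weak$^*$ differentiability of $S(\cdot)'v'|_X$, and carefully verify that the \emph{entire} functional $S_{X+Y}(t-s)'v'$ (not just its $X$-restriction) stays inside $L$ so that~(c) is available along the whole trajectory.
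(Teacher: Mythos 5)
Your proposal is correct and follows essentially the same route as the paper's proof: Laplace transform plus Fubini for (a)$\Rightarrow$(b), Yosida approximants and the limit $\lambda\to\infty$ for (b)$\Rightarrow$(c), and for (c)$\Rightarrow$(a) differentiation of $s\mapsto\dupa{T(s)u}{S_{X+Y}(t-s)'v'}$ on the regular core followed by regularization via $\lambda(\lambda-A_T)^{-1}$ and $\lambda\RprimeSXY(\lambda)$. The only cosmetic difference is that you use two separate approximation parameters where the paper uses a single $\lambda$, and the "obstacles" you flag (the mixed product rule and $S_{X+Y}(t-s)'v'\in L$) are handled exactly as you indicate, the latter being precisely the assumed $(S_{X+Y})'$-invariance of $L$.
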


\begin{proof}
  (a) $\Rightarrow$ (b): Let $\lambda\in\R$ be sufficiently large, $x\in K$, $v'\in L$. Then
  \begin{align*}
    \dupa{(\lambda-A_T)^{-1}x - (\lambda-A_S)^{-1}x}{v'}_{X\times X'} & = \int_0^\infty e^{-\lambda t} \dupa{(T(t)x-S(t)x}{v'}_{X\times X'}\,dt \\
    & \leq  \int_0^\infty e^{-\lambda t} \int_0^t \dupa{S_Y(t-s)BT_Z(s)x}{v'}_{Y\times E}\, ds \, dt \\
    & = \int_0^\infty \int_s^\infty e^{-\lambda t} \dupa{S_Y(t-s)BT_Z(s)x}{v'}_{Y\times E}\, dt \, ds \\
    & = \int_0^\infty e^{-\lambda t} \int_0^\infty e^{-\lambda s} \dupa{S_Y(t)BT_Z(s)x}{v'}_{Y\times E}\, ds\, dt  \\
    & = \int_0^\infty e^{-\lambda t}  \dupa{S_Y(t)B\int_0^\infty e^{-\lambda s} T_Z(s)x\, ds}{v'}_{Y\times E}\, dt  \\[0.5ex]
    & = \dupa{R_{S_Y}(\lambda)B (\lambda-A_{T_Z})^{-1}x}{v'}_{E'\times E}.
  \end{align*}
  Note that in the second to last step we pulled the functional $v'\circ S_Y(t)B \in Z'$ out of the integral.

  (b) $\Rightarrow$ (c): 
  Let $u\in D(A_T)\cap K$, $v'\in L\cap D(A_S')$.
  For sufficiently large $\lambda>0$ we define $J_S(\lambda) := \lambda(\lambda-A_S)^{-1}$ and $J_T(\lambda) := \lambda(\lambda-A_T)^{-1}$ and similarly for $J_{T_Z}$.
  Then $A_S J_S(\lambda) = \lambda\bigl(J_S(\lambda) - I\bigr)$, and similarly for $J_T(\lambda)$.
  Hence, 
  \[A_TJ_T(\lambda)u - A_SJ_S(\lambda)u = \lambda\bigl(J_T(\lambda)u-J_S(\lambda)u\bigr).\]
  Thus, by (b),
  \begin{equation}\label{J}
    \dupa{J_T(\lambda)A_T u}{v'}_{X\times X'} - \dupa{ J_S(\lambda)u}{A_S'v'}_{X\times X'} \leq \dupa{\lambda R_{S_Y}(\lambda)BJ_{T_Z}(\lambda) u}{v'}_{E'\times E},
  \end{equation}
  where we wrote $A_S'v' = A_S'(v'|_X)$ for short.
  Next, we let $\lambda\to \infty$. Since $J_S(\lambda),J_T(\lambda)\to I$ strongly in $L(X)$, for the left-hand side we obtain
  \[\dupa{J_T(\lambda)A_T u}{v'}_{X\times X'} - \dupa{ J_S(\lambda)u}{A_S'v'}_{X\times X'} \to \dupa{A_T u}{v'}_{X\times X'} - \dupa{u}{A_S'v'}_{X\times X'}.\]
  By the strong convergence $J_{T_Z}(\lambda)\to I$ in $L(Z)$ we have $BJ_{T_Z}(\lambda)u\to Bu$ in $Y$.
  Moreover, $\dupa{\lambda R_{S_Y}(\lambda)\cdot}{v'}_{E'\times E}\to v'$ weak$^*$ in $Y'$ by Lemma~\ref{lem:resolvent_convergence}.
  Thus, for the right-hand side we obtain
  \[\dupa{\lambda R_{S_Y}(\lambda)BJ_{T_Z}(\lambda) u}{v'}_{E'\times E} \to \dupa{Bu}{v'}_{Y\times E}.\]
  Thus, the limit $\lambda\to\infty$ in~\eqref{J} yields (c).
  
  (c) $\Rightarrow$ (a): 
  Without loss of generality let $t>0$.
  Let $u\in D(A_T)\cap K$ and $v'\in L\cap D(A_S')$.
  Then $T(s)u \in K\cap D(A_T)$ and $S_{X+Y}(t-s)'v' \in L\cap D(A_S')$ for all $s\in[0,t]$ by invariance of $K$ and $L$, respectively; note that
  \[(S_{X+Y}(t-s)'v')|_X = S(t-s)'(v'|_X) \in D(A_S').\]
  Now we use that $A_S'$ is the weak$^*$-generator of~$S'$ and apply~(c) to obtain
  \begin{align*}
    \hspace{2em} & \hspace{-2em}
    \smash[b]{\frac{d}{ds}} \dupa{T(s)u}{S_{X+Y}(t-s)'v'}_{X\times X'} \\[0.5ex]
    &= \dupa{A_T T(s) u}{S_{X+Y}(t-s)'v'\strut}_{X\times X'}
     - \dupa{T(s)u}{A_S'(S_{X+Y}(t-s)'v')|_X\strut}_{X\times X'} \\[0.5ex]
    &\leq \dupa{BT(s)u}{S_{X+Y}(t-s)'v'}_{Y\times E} \\[0.5ex]
    &= \dupa{S_Y(t-s)BT_Z(s)u}{v'}_{Y\times E}.
  \end{align*}
  Therefore,
  \begin{equation}\label{hauptsatz}
  \begin{split}
    \dupa{T(t) u}{v'}_{X\times X'} - \dupa{S(t)u}{v'}_{X\times X'} & = \int_0^t \frac{d}{ds} \dupa{T(s)u}{S_{X+Y}(t-s)'v'}_{X\times X'}\, ds \\
    & \leq \int_0^t \dupa{S_Y(t-s)BT_Z(s)u}{v'}_{Y\times E}\, ds.
  \end{split}
  \end{equation}
 
  Let now $x \in K$, $y'\in L$, and for $\lambda\in\R$ sufficiently large let $J_S(\lambda):=\lambda(\lambda-A_S)^{-1}$ as before, and similarly for $T$.
  Then $u := J_T(\lambda)x \in K\cap D(A_T)$ and $v' := \lambda \RprimeSXY(\lambda)y' \in L\cap D(A_S')$ by invariance of $K$ and $L$, respectively;
  note that $v'|_X = J_S(\lambda)'(y'|_X) \in D(A_S')$ by Remark~\ref{rem:w-d}.
  Applying \eqref{hauptsatz} and~\eqref{R'onY}, we obtain
  \begin{align*}
    \hspace{2em} & \hspace{-2em} \dupa{J_S(\lambda)\bigl(T(t)-S(t)\bigr)J_T(\lambda)x}{y'}_{X\times X'} \\
    & \leq \int_0^t \dupa{S_Y(t-s)BT_Z(s)J_T(\lambda)x}{\lambda \RprimeSXY(\lambda)y'}_{Y\times E}\,ds \\
    & = \int_0^t \dupa{\lambda R_{S_Y}(\lambda)S_Y(t-s)BT_Z(s)J_{T_Z}(\lambda)x}{y'\strut}_{E'\times E}\,ds.
  \end{align*}
  For $\lambda\to\infty$ we obtain the assertion by Lemma~\ref{lem:resolvent_convergence}, the strong convergence $J_{T_Z}(\lambda)\to I$ in $L(Z)$ and dominated convergence. 
\end{proof}

Although the sets $K$ and $L$ can be arbitrary in the above theorem, it turns out that it is no loss of generality that they are closed convex cones.
In fact, for applications (see the concluding section), $K$ and $L$ should be thought of (positive) cones in some Banach lattice.
The positivity as well as the lattice structure, however, does not play the leading role here. That is why we decided to suppress this in the abstract presentation of this work.

\begin{remark}
\label{rem:properties_K_L}
  \begin{enumerate}
    \item
      Without loss of generality we may assume that $K\subseteq X\cap Z$ and $L\subseteq (X+Y)'\cap E$ are convex cones.
      Indeed, the invariance assumptions on the sets $K$ and $L$ imply the invariances for the corresponding convex cones generated by $K$ and $L$, respectively, by linearity.
      Moreover, the statements (a) to~(c) of Theorem~\ref{thm:main3} for the sets $K$ and $L$ imply the corresponding statements for the generated convex cones, again by (bi-)linearity.
    \item
      Without loss of generality we may assume that $K\subseteq X\cap Z$ is closed.
      Indeed, by continuity the invariances of $K$ imply the corresponding invariances of the closure of $K$ in $X\cap Z$.
      Moreover, statement~(b) of Theorem~\ref{thm:main3} implies the corresponding statement for the closure of $K$, again by continuity.
      Thus, by the equivalence of statements (a) to~(c), each of the statements of Theorem~\ref{thm:main3} is valid if and only if it holds with $K$ replaced by its closure.
    \item
      Without loss of generality we may assume that $L\subseteq (X+Y)'\cap E$ is closed in $(X+Y)'$.
      Indeed, by continuity the invariances of $L$ imply the invariances of the closure of $L$ in $(X+Y)'$.
      Moreover, statement~(b) of Theorem~\ref{thm:main3} implies the corresponding statement for the closure of $L$, again by continuity and dominated convergence.
      Again, it follows that each of the statements of Theorem~\ref{thm:main3} is valid if and only if it holds with $L$ replaced by its closure, by continuity and dominated convergence.
    \item
      Let $E$ be invariant under $S_Y'$ and under $\lambda R_{S_Y}(\lambda)'$ for some $\lambda>0$ (by means of embedding $E$ into $E''$ canonically first).
      Then $(X+Y)'\cap E$ is invariant under $(S_{X+Y})'$ and under $\lambda\RprimeSXY(\lambda)$.
      In this case, without loss of generality we may assume that $L\subseteq (X+Y)'\cap E$ is closed in $\sigma((X+Y)'\cap E,X+Y)$. 
      Indeed, let $L$ be invariant under $(S_{X+Y})'$ and under $\lambda\RprimeSXY(\lambda)$ for some $\lambda>0$.      
      By (a) we may assume that $L$ is a convex cone.
      Let 
      \[L^* := \set{z\in X+Y;\;\dupa{e}{z}_{(X+ Y)'\cap E \times (X+Y)} \geq0 \; (e\in L)}\]
      be the dual cone of $L$.
      Let $t\geq 0$.
      If $z\in L^*$, then for all $e\in L$ one has 
      \[\dupa{e}{S_{X+Y}(t) z}_{(X+ Y)'\cap E \times (X+Y)} = \dupa{S_{X+Y}(t)'e}{z}_{(X+Y)'\cap E \times (X+Y)} \geq 0,\]
      since $S_{X+Y}(t)'e\in L$, so it follows that $S_{X+Y}(t)x\in L^*$.
      Thus $L^*$ is invariant under $S_{X+Y}(t)$, hence $L^*$ is invariant under $S_{X+Y}$.
      In the same way one shows that $(L^*)^* = \set{e\in(X+Y)'\cap E;\; \dupa{e}{z}_{(X+Y)'\cap E \times (X+Y)} \geq 0\; (z\in L^*)}$ is invariant under $S_{X+Y}(t)'$, using invariance of $E$.
      Now by the bipolar theorem, $(L^*)^*$ equals the $\sigma((X+Y)'\cap E,X+Y)$-closure of $L$, thus this closure is invariant under $(S_{X+Y})'$.      
      Moreover, for all $e\in \overline{L}$ and $z\in L^*$ we obtain
      \[\dupa{\lambda \RprimeSXY(\lambda)e}{z}_{(X+Y)'\cap E \times (X+Y)} = \int_0^\infty \lambda e^{-\lambda t} \dupa{S_{X+Y}(t)'e}{z}_{(X+Y)'\cap E \times (X+Y)}\, dt \geq 0\]
      since $(S_{X+Y})'(t)e \in \overline{L} = (L^*)^*$.
      This shows that $\lambda \RprimeSXY(\lambda)e \in (L^*)^* = \overline{L}$, i.e.\ $\overline{L}$ is invariant under $\lambda \RprimeSXY(\lambda)$.
      
      We now show that statement~(c) of Theorem~\ref{thm:main3} implies the corresponding statement for the $\sigma((X+Y)'\cap E,X+Y)$-closure of $L$.
      Let $v'\in \overline{L}\cap D(A_S')$. Then there exists $(v_\iota')_\iota$ in $L$ such that $\lim_\iota v_\iota'=v$ in $\sigma((X+Y)'\cap E,X+Y)$. 
      For $\lambda >0$ sufficiently large, we observe that $v_{\iota,\lambda}'\coloneqq \lambda R_{S_{X+Y}}'(\lambda)v_\iota'\in L$, by the invariance of $L$. 
      Moreover, we deduce $v_{\iota,\lambda}|_{X}\in D(A_S')$. Thus, the inequality (c) in Theorem~\ref{thm:main3} holds for $v'$ replaced by $v_{\iota,\lambda}'$.
      By continuity, we can perform the limit in $\iota$ (for the second term note that $A_S' \lambda (\lambda-A_S')^{-1}=\left(A_S \lambda (\lambda-A_S)^{-1}\right)'$ is $\sigma(X',X)$-$\sigma(X',X)$-continuous).
      Letting finally $\lambda\to \infty$, we deduce that (c) of Theorem~\ref{thm:main3} also holds for $v'$ (for the third term use Lemma \ref{lem:resolvent_convergence}).
      Thus, w.l.o.g.\ we may assume that $L$ is $\sigma((X+Y)'\cap E,X+Y)$-closed.
  \end{enumerate}
\end{remark}

We point out that in the above theorem, for each of the sets $K$ and $L$ we assumed two invariances. 
Here we explain the relation between these two invariances.

\begin{remark}
\label{rem:invariance}
  \begin{enumerate}
    \item
      Let $K\subseteq X\cap Z$ be convex and closed.
      Then the invariance of $K$ under $T$ is equivalent to the invariance of $K$ under $\lambda(\lambda-A_T)^{-1}$ for all sufficiently large $\lambda>0$.
    \item
      Let $S_Y$ be strongly continuous on $Y$. Then $S_{X+Y}$ is strongly continuous on $X+Y$. Let $A_{X+Y}$ be the generator of $S_{X+Y}$. Then
      $\RprimeSXY(\lambda) = \bigl((\lambda-A_{X+Y})^{-1}\bigr)'|_{(X+Y)'\cap E}$.
      Let $L\subseteq (X+Y)'\cap E$ be a convex cone and closed in $(X+Y)'$. Then $L$ is invariant under $(S_{X+Y})'$ if and only if $L$ is invariant under $\lambda \RprimeSXY(\lambda)$ for all sufficiently large $\lambda>0$.
      
      To prove this we first claim that if $B\in L(X+Y)$ then $L$ is invariant under $B'$ if and only if $L^*$ is invariant under $B$. Indeed, let $L$ be invariant under $B'$.
      Let $L^*:=\set{z\in (X+Y);\; \dupa{e}{z}_{(X+Y)'\times (X+Y)}\geq 0\; (e\in L)}$ be the dual cone. Then for $z\in L^*$, $e\in L$ we obtain
      \[\dupa{e}{Bz}_{(X+Y)'\times (X+Y)} = \dupa{B'e}{z}_{(X+Y)'\times (X+Y)}\geq 0\]
      since $B'e\in L$. Hence, $Bz\in L^*$, i.e.\ $L^*$ is invariant under $B$.
      If $L^*$ is invariant under $B$, then analogously $(L^*)^* = \set{e\in (X+Y)';\; \dupa{e}{z}_{(X+Y)'\times (X+Y)}\geq 0\;(z\in L^*)}$ is invariant under $B'$. By the bipolar theorem, $(L^*)^* = L$ since $L$ is a closed convex cone.     
      
      By the claim, we now observe that invariance of $L$ under $(S_{X+Y})'$ is equivalent to invariance of $L^*$ under $S_{X+Y}$.
      Since $S_{X+Y}$ is a $C_0$-semigroup on $X+Y$, invariance of $L^*$ under $S_{X+Y}$ is equivalent to invariance of $L^*$ under $\lambda(\lambda-A_{X+Y})^{-1}$ for sufficiently large $\lambda>0$.
      Again by the claim, invariance of $L^*$ under $\lambda(\lambda-A_{X+Y})^{-1}$ is equivalent to invariance of $L$ under $\lambda\bigl((\lambda-A_{X+Y})^{-1}\bigr)'$. Since $L\subseteq (X+Y)'\cap E$, this is equivalent to invariance of $L$ under $\lambda\RprimeSXY(\lambda)$. 
    \item
      Let $E$ be invariant under $S_Y'$ and under $\lambda R_{S_Y}(\lambda)'$ for all sufficiently large $\lambda>0$ (by means of embedding $E$ into $E''$ canonically first),
      $L\subseteq (X+Y)'\cap E$ a convex cone and closed in $\sigma((X+Y)'\cap E,X+Y)$ and invariant under $(S_{X+Y})'$.
      Then $L$ is invariant under $\lambda \RprimeSXY(\lambda)$ for all sufficiently large $\lambda>0$.
      Indeed, let $L$ be invariant under $(S_{X+Y})'$. Let $L^*$ be the dual cone. For $e\in L$ and $z\in L^*$ we obtain
      \[\dupa{\lambda \RprimeSXY(\lambda)e}{z}_{(X+Y)'\cap E \times (X+Y)} = \int_0^\infty \lambda e^{-\lambda t} \dupa{S_{X+Y}(t)'e}{z}_{(X+Y)'\cap E \times (X+Y)}\, dt \geq 0\]
      since $(S_{X+Y})'(t)e \in L$. Thus, $\lambda \RprimeSXY(\lambda)e \in (L^*)^* = L$ by the bipolar theorem and the fact that $L$ is a closed convex cone.
  \end{enumerate}
\end{remark}

In some applications one can estimate the integrand at the right-hand side of statement (a) in Theorem \ref{thm:main3}.
Under such an assumption the formulation of the theorem simplifies and yields generalized versions of the perturbation result in \cite{bgk2009,w2011,SeifertWingert2014}.

\begin{corollary}
\label{cor:main} 
  Assume Hypothesis~\ref{hyp:mainresult}.
  Let $L\subseteq (X+Y)'\cap E$, $L$ invariant under $(S_{X+Y})'$ and under $\lambda \RprimeSXY(\lambda)$,
  $K\subseteq X\cap Z$, $K$ invariant under $T$ and under $\lambda(\lambda-A_T)^{-1}$ for all sufficiently large $\lambda>0$.
  Let $B\in L(Z,Y)$. 
  Furthermore, assume there exist $M\geq 1$, $\omega\in\R$ such that for all $t\geq 0$ and all $x \in K$, $v'\in L$ we have
  \begin{equation}
  \label{eq:extra_assumption}
    \dupa{S_Y(t-s)BT_Z(s)x}{v'}_{Y\times E} \leq M e^{\omega t} \dupa{Bx}{v'}_{Y\times E} \quad(0\leq s\leq t).
  \end{equation}
  Then the following are equivalent:
  \begin{enumerate}
    \item
      There exists $C_1\geq 0$ such that for all $t\geq 0$ and all $x \in K$, $v'\in L$ we have
      \[\dupa{T(t)x}{v'}_{X\times X'} \leq \dupa{S(t)x}{v'}_{X\times X'} + C_1 te^{\omega t}\dupa{Bx}{v'}_{Y\times E}.\]
    \item
      There exists $C_2\geq 0$ such that for all sufficiently large $\lambda>\omega$ and all $ x\in K$, $v'\in L$ we have
      \[\dupa{(\lambda-A_T)^{-1}x}{v'}_{X\times X'} \leq \dupa{(\lambda-A_S)^{-1}x}{v'}_{X\times X'} + \frac{C_2}{(\lambda-\omega)^2}\dupa{Bx}{v'}_{Y\times E}.\]
    \item
      There exists $C_3\geq 0$ such that for all $u\in D(A_T)\cap K$ and all $v'\in L\cap D(A_S')$ we have
      \[\dupa{A_T u}{v'}_{X\times X'} \leq \dupa{u}{A_S'v'}_{X\times X'} + C_3\dupa{Bu}{v'}_{Y\times E}.\]
  \end{enumerate}
\end{corollary}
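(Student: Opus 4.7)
The plan is to run the cycle (a)$\Rightarrow$(b)$\Rightarrow$(c)$\Rightarrow$(a), mirroring the structure of the proof of Theorem~\ref{thm:main}, but now each step simplifies because the convolution against $S_Y(\cdot-s)BT_Z(s)$ collapses into the single pairing $\dupa{Bx}{v'}_{Y\times E}$. The additional hypothesis~\eqref{eq:extra_assumption} is used only in the final step (c)$\Rightarrow$(a); the other two implications do not rely on it at all.

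For (a)$\Rightarrow$(b), I would multiply the inequality in~(a) by $e^{-\lambda t}$ for $\lambda>\omega$ sufficiently large and integrate over $t\in[0,\infty)$. Since $T$ and $S$ are $C_0$-semigroups on $X$, the Laplace transforms produce $\dupa{(\lambda-A_T)^{-1}x}{v'}_{X\times X'}$ and $\dupa{(\lambda-A_S)^{-1}x}{v'}_{X\times X'}$, whereas $C_1\int_0^\infty t e^{-(\lambda-\omega)t}\,dt=C_1/(\lambda-\omega)^2$ gives the remainder term. Thus~(b) holds with $C_2=C_1$.

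For (b)$\Rightarrow$(c), I would fix $u\in D(A_T)\cap K$ and $v'\in L\cap D(A_S')$, apply~(b) with $x=u$, and multiply by $\lambda^2$. Writing $J_S(\lambda)=\lambda(\lambda-A_S)^{-1}$ and $J_T(\lambda)=\lambda(\lambda-A_T)^{-1}$ as in the proof of Theorem~\ref{thm:main} and using $A_TJ_T(\lambda)u=J_T(\lambda)A_Tu$ together with $v'\in D(A_S')$, the left-hand side rearranges (exactly as it does there) into $\dupa{J_T(\lambda)A_Tu}{v'}_{X\times X'}-\dupa{J_S(\lambda)u}{A_S'v'}_{X\times X'}$, while the right-hand side becomes $\frac{C_2\lambda^2}{(\lambda-\omega)^2}\dupa{Bu}{v'}_{Y\times E}$. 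Passing to the limit $\lambda\to\infty$, the Yosida approximations converge strongly to the identity on $X$ and $\frac{C_2\lambda^2}{(\lambda-\omega)^2}\to C_2$, producing~(c) with $C_3=C_2$.

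For (c)$\Rightarrow$(a), the key observation is that the inequality in~(c) is precisely statement~(c) of Theorem~\ref{thm:main} with $B$ replaced by $C_3B\in L(Z,Y)$; the invariance hypotheses on $K$ and $L$ are independent of $B$, so Theorem~\ref{thm:main} is applicable and yields
\[
\dupa{T(t)x}{v'}_{X\times X'} \le \dupa{S(t)x}{v'}_{X\times X'} + C_3\int_0^t \dupa{S_Y(t-s)BT_Z(s)x}{v'}_{Y\times E}\,ds.
\]
Now~\eqref{eq:extra_assumption} bounds the integrand pointwise by $Me^{\omega t}\dupa{Bx}{v'}_{Y\times E}$, so the integral is at most $Mte^{\omega t}\dupa{Bx}{v'}_{Y\times E}$, and (a) follows with $C_1=C_3M$. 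The only real insight needed is this rescaling trick in the final step: recognizing that~\eqref{eq:extra_assumption} is exactly the hypothesis that allows the convolution remainder from the main theorem to be absorbed into a $te^{\omega t}$ factor.
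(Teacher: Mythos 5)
Your proposal is correct and follows essentially the same route as the paper: Laplace transform for (a)$\Rightarrow$(b), the Yosida-approximation identity $A_TJ_T(\lambda)u-A_SJ_S(\lambda)u=\lambda(J_T(\lambda)u-J_S(\lambda)u)$ with $\lambda\to\infty$ for (b)$\Rightarrow$(c), and an appeal to Theorem~\ref{thm:main} ``(c)$\Rightarrow$(a)'' with $B$ replaced by $C_3B$ followed by the pointwise bound \eqref{eq:extra_assumption} for (c)$\Rightarrow$(a). The constants $C_2=C_1$, $C_3=C_2$, $C_1=C_3M$ and the observation that \eqref{eq:extra_assumption} is only needed in the last step all match the paper's argument.
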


\begin{proof}
  The proof of ``(a) $\Rightarrow$ (b)'' and ``(b) $\Rightarrow$ (c)'' is analogous to (but easier than) the proof of Theorem~\ref{thm:main}, so we only sketch the differences.
  For ``(c) $\Rightarrow$ (a)'' we make use of the corresponding implication in Theorem~\ref{thm:main}.
  
  (a) $\Rightarrow$ (b): Let $\lambda>\omega$ sufficiently large, $ x\in K$, $v'\in L$. Then
  \begin{align*}
    \dupa{(\lambda-A_T)^{-1}x - (\lambda-A_S)^{-1}x}{v'}_{X\times X'} & = \dupa{\int_0^\infty e^{-\lambda t} \bigl(T(t)x-S(t)x\bigr)x\,dt}{v'}_{X\times X'} \\
    & = \int_0^\infty e^{-\lambda t} \dupa{\bigl(T(t)x-S(t)x\bigr)x}{v'}_{X\times X'}\,dt \\
    & \leq  \int_0^\infty e^{-\lambda t} C_1 te^{\omega t} \dupa{Bx}{v'}_{Y\times E}\, dt \\
    & = C_1 \int_0^\infty t e^{-(\lambda-\omega) t}\, dt  \dupa{Bx}{v'}_{Y\times E}.
  \end{align*}
  Integrating by parts we obtain the assertion.
  
  (b) $\Rightarrow$ (c): For sufficiently large $\lambda>\omega$ let again $J_S(\lambda) := \lambda(\lambda-A_S)^{-1}$ and $J_T(\lambda) := \lambda(\lambda-A_T)^{-1}$.
  Let $u\in D(A_T)\cap K$ and $v'\in L\cap D(A_S')$. As in the proof of Theorem~\ref{thm:main} we obtain
  \[A_TJ_T(\lambda)u - A_SJ_S(\lambda)u = \lambda\bigl(J_T(\lambda)u-J_S(\lambda)u\bigr),\]
  which by (b) yields
  \begin{align*}
    \dupa{A_TJ_T(\lambda)u - A_SJ_S(\lambda)u}{v'}_{X\times X'} & = \dupa{\lambda\bigl(J_T(\lambda)u-J_S(\lambda)u\bigr)}{v'}_{X\times X'} \\
    & \leq C_2 \frac{\lambda^2}{(\lambda-\omega)^2} \dupa{Bu}{v'}_{Y\times E},
  \end{align*}
  and therefore
  \[\dupa{J_T(\lambda)A_T u}{v'}_{X\times X'} \leq \dupa{J_S(\lambda)u}{A_S'v'}_{X\times X'} + C_2 \frac{\lambda^2}{(\lambda-\omega)^2}\dupa{B u}{v'}_{Y\times E}.\]
  Again, $\lambda\to \infty$ yields (c).
  
  (c) $\Rightarrow$ (a): 
  By Theorem~\ref{thm:main} ``(c) $\Rightarrow$ (a)'' with $B$ replaced by $C_3B$, for all $t\geq 0$ and all $x \in K$, $v'\in L$ we obtain
  \[\dupa{T(t)x}{v'}_{X\times X'} \leq \dupa{S(t)x}{v'}_{X\times X'} + C_3\int_0^t \dupa{S(t-s)BT(s)x}{v'}_{Y\times E}\, ds\]
  Now, \eqref{eq:extra_assumption} yields
  \begin{align*}
    \dupa{T(t)x}{v'}_{X\times X'} & \leq \dupa{S(t)x}{v'}_{X\times X'} + C_3 \int_0^t M e^{\omega t} \dupa{Bx}{v'}_{Y\times E}\, ds\\
    & = \dupa{S(t)x}{v'}_{X\times X'} + C_3 M t e^{\omega t} \dupa{Bx}{v'}_{Y\times E}. \qedhere
  \end{align*}
\end{proof}

\begin{remark}
\label{rem:strong_formulation}
  In the context of Banach lattices and positive semigroups and positive $B$ one can ask whether the `tested' inequalities in (a) and (b) of Theorem~\ref{thm:main3} and Corollary~\ref{cor:main} can also be
  formulated in a `strong' inequality. It turns out that this is indeed the case provided $L$ contains sufficiently many elements to detect positivity and, for Theorem~\ref{thm:main3}(a), the integral can be computed in $Y$.
\end{remark}

\section{Applications}\label{sec:applications}

\subsection{Bi-continuous semigroups}

Let $Y$ be a Banach space and let $\mathcal{T}_{\norm{\cdot}}$ denote the norm-topology on $Y$.
Let $\mathcal{T}\subseteq \mathcal{T}_{\norm{\cdot}}$ be a locally convex Hausdorff topology on $Y$, such that $(Y,\mathcal{T})' \subseteq (Y,\mathcal{T}_{\norm{\cdot}})'$ is norming for $Y$ and $(Y,\mathcal{T})$ is sequentially complete on norm-bounded subsets of $X$.

Let $S$ be a semigroup on $Y$. We say that $S$ is \emph{locally bi-continuous} if
\begin{enumerate}
  \item
    there exists $M\geq 1$, $\omega\in\R$ such that $\norm{S(t)}\leq Me^{\omega t}$ for all $t\geq 0$,
  \item
    for all $y\in Y$ we have $\calTlim_{t\to 0} S(t)y = y$,
  \item
    for all bounded sequences $(y_n)$ in $Y$ and $y\in Y$ such that $\calTlim_{n\to\infty} y_n = y$ we have $\calTlim_{n\to\infty} S(t)y_n = S(t)y$ uniformly for $t$ in compact subsets of $[0,\infty)$.
\end{enumerate}

For further information on bi-continuous semigroups, see e.g.\ \cite{Kuehnemund2001,Kuehnemund2003}.

\begin{example}
  Let $Y:=C_{\rm b}(\R^d)$ and $\mathcal{T}$ be the compact-open topology on $Y$. Let $k\from (0,\infty)\times\R^d\to \R$, 
  \[k(t,x):=k_t(x):=\frac{1}{(4\pi t)^{d/2}} e^{-\norm{x}^2/(4t)} \quad(t>0, x\in\R^d)\]
  be the Gau{\ss}-Weierstra{\ss} kernel. Define $S_Y\from [0,\infty)\to L(X)$ by
  \[S_Y(t)f := \begin{cases}
		f & t=0,\\
		k_t * f & t>0.
	    \end{cases}\]
  Then $S_Y$ is a locally bi-continuous semigroup on $Y$.
\end{example}

In our abstract setting, we would like to have $S_Y$ to be locally bi-continuous and $E:=(Y,\mathcal{T})'$. However, $(Y,\mathcal{T})'$ is in general not closed. 
We can nevertheless work with this space since the closedness is only used in Lemma~\ref{lem:exponential_bound} to obtain an exponential bound for the semigroup, and such a bound exists by definition.
Note that in such a case the mapping $\dupa{S_Y(\cdot)y}{e}_{Y\times E}$ is continuous for all $y\in Y$ and $e\in (Y,\mathcal{T})'$ by property~(b) and continuity of $e$.
Moreover, by \cite[Section 1.2]{Kuehnemund2001}, there exists a unique operator $A_{S_Y}$ in $Y$ (the \emph{generator} of the bi-continuous semigroup) such that $R_{S_Y}(\lambda) = \kappa(\lambda-A_{S_Y})^{-1}$, where $\kappa\from Y\to E'$ is the embedding.

Hence, the following theorem is essentially a modification of Theorem~\ref{thm:main3} to this setting.

\begin{theorem}
\label{thm:main_bicontinuous}
  Let $X,Y,Z$ be Banach spaces, $V$ a Hausdorff topological vector space, $\mathcal{T}$ a locally convex Hausdorff topology on $Y$ as above. Let:
  \begin{enumerate}\renewcommand{\labelenumi}{{\rm(\roman{enumi})}}
   \item $(X,Y)$, $(X,Z)$ be $V$-compatible;
   \item $S$ $C_0$-semigroup on $X$ (with generator $A_S$);
   \item $T$ $C_0$-semigroup on $X$ (with generator $A_T$);
   \item there exists a $C_0$-semigroup $T_Z$ on $Z$ consistent with $T$;
   \item $E:=(Y,\mathcal{T})'$;
   \item there exists a locally bi-continuous semigroup $S_Y$ on $Y$ consistent with $S$ with generator $A_{S_Y}$.
  \end{enumerate}
  Let $L\subseteq (X+Y)'\cap E$ be invariant under $S'$, $\lambda\bigl((\lambda-A_S)^{-1}\bigr)'$ and under $\lambda\bigl((\lambda-A_{S_Y})^{-1}\bigr)'$ for all sufficiently large $\lambda>0$,
  $K\subseteq X\cap Z$ invariant under $T$ and under $\lambda(\lambda-A_T)^{-1}$ for all sufficiently large $\lambda>0$.
  Let $B\in L(Z,Y)$. 
  Then the following are equivalent:
  \begin{enumerate}
    \item
      For all $t\geq 0$ and all $ x \in K$, $v'\in L$ we have
      \[\dupa{T(t)x}{v'}_{X\times X'} \leq \dupa{S(t)x}{v'}_{X\times X'} + \int_0^t \dupa{S_Y(t-s)BT_Z(s)x}{v'}_{Y\times E}\, ds.\]
    \item
      For all sufficiently large $\lambda\in\R$ and all $ x\in K$, $v'\in L$ we have
      \[\dupa{(\lambda-A_T)^{-1}x}{v'}_{\!X\times X'} \!\leq\! \dupa{(\lambda-A_S)^{-1}x}{v'}_{\!X\times X'} \!+\! \dupa{(\lambda-A_{S_Y})^{-1}B(\lambda-A_{T_Z})^{-1}x}{v'}_{\!Y\times E}.\]
    \item 
      For all $u\in D(A_T)\cap K$ and all $v'\in L\cap D(A_S')$ we have
      \[\dupa{A_T u}{v'}_{X\times X'} \leq \dupa{u}{A_S'v'}_{X\times X'} + \dupa{Bu}{v'}_{Y\times E}.\]
  \end{enumerate}
\end{theorem}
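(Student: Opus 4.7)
The plan is to reduce Theorem~\ref{thm:main_bicontinuous} to Theorem~\ref{thm:main3} by checking that, in the bi-continuous setting, all the hypotheses of Hypothesis~\ref{hyp:mainresult} are effectively in force, and then translating the conclusions across the identification $R_{S_Y}(\lambda) = \kappa(\lambda - A_{S_Y})^{-1}$ recalled just before the statement. First I would dispose of the two formal gaps with Hypothesis~\ref{hyp:mainresult}: the subspace $E := (Y,\mathcal{T})'$ need not be norm-closed in $Y'$, but as the paragraph preceding the theorem explains, closedness was only used in Lemma~\ref{lem:exponential_bound} to produce an exponential bound for $S_Y$, and this bound is built into the definition of ``locally bi-continuous''. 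Moreover, the continuity of $t\mapsto \dupa{S_Y(t)y}{e}_{Y\times E}$ for $y\in Y$, $e\in E$ follows from property~(b) of bi-continuity and the $\mathcal{T}$-continuity of $e$, combined with the semigroup law and the exponential bound to extend continuity from $t=0$ to all $t\ge0$.

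Next, using the identity $R_{S_Y}(\lambda) = \kappa(\lambda - A_{S_Y})^{-1}$, the middle term in statement~(b) of Theorem~\ref{thm:main3},
\[
\dupa{R_{S_Y}(\lambda) B(\lambda-A_{T_Z})^{-1}x}{v'}_{E'\times E},
\]
transcribes verbatim to
\[
\dupa{(\lambda-A_{S_Y})^{-1} B(\lambda-A_{T_Z})^{-1}x}{v'}_{Y\times E},
\]
which is exactly what appears in~(b) of the present theorem. Statements~(a) and~(c) coincide with those of Theorem~\ref{thm:main3} without further modification.

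The main work is the invariance reconciliation. Theorem~\ref{thm:main3} asks for $L$ to be invariant under $(S_{X+Y})'$ and $\lambda\RprimeSXY(\lambda)$, whereas the present theorem assumes invariance under $S'$, under $\lambda\bigl((\lambda-A_S)^{-1}\bigr)'$, and under $\lambda\bigl((\lambda-A_{S_Y})^{-1}\bigr)'$. For $v'\in L\subseteq (X+Y)'\cap E$ one has the splittings
\[
(S_{X+Y}(t)'v')|_X = S(t)'(v'|_X), \qquad (S_{X+Y}(t)'v')|_Y = S_Y(t)'(v'|_Y),
\]
and, by Remark~\ref{rem:w-d},
\[
(\RprimeSXY(\lambda)v')|_X = \bigl((\lambda-A_S)^{-1}\bigr)'(v'|_X), \qquad (\RprimeSXY(\lambda)v')|_Y = \bigl((\lambda-A_{S_Y})^{-1}\bigr)'(v'|_Y).
\]
For the resolvent invariance, the displayed splittings and the given resolvent invariances on the $X$- and $Y$-restrictions immediately force $\lambda\RprimeSXY(\lambda)v'\in L$. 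For the semigroup invariance, the $X$-part is handled by $S'$ directly; the $Y$-part of $S_{X+Y}(t)'v'$ lives in $Y'$ but need not lie in $E$ a priori since $S_Y$ is only bi-continuous, and this is where I would use Remark~\ref{rem:invariance}(c) (or a direct bipolar/dominated-convergence argument with the resolvent invariance) to upgrade invariance under $\lambda\bigl((\lambda-A_{S_Y})^{-1}\bigr)'$ to invariance under $S_Y(t)'$ on the $\sigma((X+Y)'\cap E,X+Y)$-closure of $L$, which by Remark~\ref{rem:properties_K_L} we may assume equals $L$.

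Once these reductions are in place the three equivalences are immediate from the corresponding equivalences in Theorem~\ref{thm:main3}. The main obstacle is the last step above: reconciling the fact that $S_Y$ is bi-continuous (so $S_Y(t)'$ does not obviously preserve $E$) with the need for $(S_{X+Y})'$-invariance of $L$ in $(X+Y)'\cap E$; this is the only place where the bi-continuous structure and the possibly non-closed $E$ genuinely interact, and it is the step that justifies listing the resolvent invariance rather than the semigroup invariance on the $Y$-side in the theorem's hypotheses.
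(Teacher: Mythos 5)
Your proposal is correct and takes essentially the same route as the paper: the paper supplies no separate proof of Theorem~\ref{thm:main_bicontinuous}, relying entirely on the preceding paragraph (non-closedness of $E=(Y,\mathcal{T})'$ matters only for the exponential bound of Lemma~\ref{lem:exponential_bound}, which holds by definition of bi-continuity; continuity of $\dupa{S_Y(\cdot)y}{e}_{Y\times E}$; and the identification $R_{S_Y}(\lambda)=\kappa(\lambda-A_{S_Y})^{-1}$) together with the remark that the theorem is ``essentially a modification of Theorem~\ref{thm:main3} to this setting''. Your extra care about reconciling the invariance hypotheses --- in particular the observation that $S_Y(t)'$ need not preserve $E$, so that the resolvent invariance is the natural hypothesis on the $Y$-side --- goes beyond what the paper records but is consistent with its intended reading and with Remarks~\ref{rem:properties_K_L} and~\ref{rem:invariance}.
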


\begin{example}
  As a concrete example let $X:=L_2(\R^d)$, $Y:=C_{\rm b}(\R^d)$, $Z:=C_0^1(\R^d)$.
  Let $\mathcal{T}$ be the compact-open topology on $Y$. Let $k\from (0,\infty)\times \R^d\to \R$, 
  \[k(t,x):=k_t(x):=\frac{1}{(4\pi t)^{d/2}} e^{-\norm{x}^2/(4t)} \quad(t>0, x\in\R^d)\]
  be the Gau{\ss}-Weierstra{\ss} kernel. Define $S\from [0,\infty)\to L(X)$ by
  \[S(t)f := \begin{cases}
  	      f & t=0,\\
  	      k_t * f & t>0.
             \end{cases}\]
  Then $S$ defines a positive $C_0$-semigroup on $X$. Moreover, $S_Y\from [0,\infty)\to L(Y)$,
  \[S_Y(t)f := \begin{cases}
  	      f & t=0,\\
  	      k_t * f & t>0,
             \end{cases}\]
  is locally bi-continuous on $Y$ and consistent with $S$.
  Let $0\leq b_1,\ldots,b_d \in C_{\rm b}(\R^d)$, $b:=(b_1,\ldots,b_d)$ and let $T$ be the positive $C_0$-semigroup on $X$ generated by $-\Delta + b\cdot\grad$.
  Note that $-\Delta + b\cdot\grad$ also generates a $C_0$-semigroup $T_Z$ on $Z$ which is consistent with $T$.
  Let $B\in L(Z,Y)$ be defined by $Bu := b_{\rm max} \sum_{j=1}^d \partial_j u$, where $b_{\rm max}:=\sup_{x\in\R^d} \max_{j\in\set{1,\ldots,d}} b_j(x)$.
  Let $K:=\set{u\in L_2(\R^d)\cap C_0^1(\R^d);\; u\geq 0}\subseteq X\cap Z$. Then $K$ is a closed convex cone and invariant under $T$ and therefore also invariant under $\lambda(\lambda-A_T)^{-1}$ for sufficiently large $\lambda>0$, see Remark~\ref{rem:invariance}(a).
  Let $L:=\set{v'\in (X+Y)'\cap E;\; v'\geq 0}$. Then $L$ is invariant under $(S_{X+Y})'$, $\lambda\bigl((\lambda-A_S)^{-1}\bigr)'$ and under $\lambda\bigl((\lambda-A_{S_Y})^{-1}\bigr)'$ for sufficiently large $\lambda>0$.
  For $u\in D(A_T)\cap K$ and $v'\in L\cap D(A_S')$ we obtain
  \begin{align*}
    \dupa{A_T u}{v'}_{X\times X'} & = \dupa{-\Delta u + b\cdot\grad u}{v'}_{X\times X'} \\
    & \leq \dupa{u}{-\Delta v'}_{X\times X'} + \dupa{b_{\rm max}\cdot \grad u}{v'}_{Y\times E}
    = \dupa{u}{A_S'v'}_{X\times X'} + \dupa{Bu}{v'}_{Y\times E}.
  \end{align*}
  Thus, for all $t\geq 0$, $u \in K$ and $v'\in L$ we obtain
  \[\dupa{T(t)u}{v'}_{X\times X'} \leq \dupa{S(t)u}{v'}_{X\times X'} + \int_0^t \dupa{S_Y(t-s)BT_Z(s)u}{v'}_{Y\times E}\, ds.\]
\end{example}

\subsection{Perturbations of positive semigroups by $L_\infty$-functions}

Let $(\Omega,\mu)$ be a localizable measure space, $X:= L_2(\mu)$, $Y:= L_\infty(\mu)$, $Z:=L_1(\mu)$, $E:=L_1(\mu)$ the predual of $Y$.
Let $T,S$ be positive $C_0$-semigroups on $L_2(\mu)$ with generators $A_S$, $A_T$, respectively.

Assume there exist $M\geq 1$, $\omega\in\R$ such that
\begin{align*}
 \norm{S(t)^* u}_1 & \leq Me^{\omega t}\norm{u}_1 \quad (u\in L_2\cap L_1(\mu), t\geq 0),\\
 \norm{T(t) u}_1 & \leq Me^{\omega t}\norm{u}_1 \quad (u\in L_2\cap L_1(\mu), t\geq 0).
\end{align*}
Then the adjoint semigroup $S'$ of $S$ extends to a consistent positive $C_0$-semigroup $S^*_1$ on $L_1(\mu)$ by \cite[Theorem 7]{Voigt1992}, 
so $S_\infty(t):= S^*_1(t)'$ ($t\geq 0$) defines a weak$^*$-continuous semigroup on $L_\infty(\mu)$
such that $S_\infty$ is positive and consistent with $S$. Moreover, $E$ is invariant under $(S_\infty)'$ and under $\lambda R_{S_\infty}(\lambda)'$ for sufficiently large $\lambda>0$.
Also $T|_{L_2\cap L_1(\mu)}$ extends to a $C_0$-semigroup $T_1$ on $L_1(\mu)$ such that $T_1$ is consistent with $T$, see \cite[Theorem 7]{Voigt1992}.

Let $L:=\set{f\in L_2(\mu)\cap L_1(\mu);\; f\geq 0} \subseteq (X+Y)'\cap E$ and $K:=\set{f\in L_2(\mu)\cap L_1(\mu);\; f\geq 0} \subseteq X\cap Z$.
Then $L$ is invariant under $(S_{X+Y})'$ and $L$ is a $\sigma((L_2+L_\infty)'\cap L_1,L_2+L_\infty)$-closed convex cone. Hence, $L$ is also invariant under $\lambda R_{S_{L_2+L_\infty}}'(\lambda)$ for sufficiently large $\lambda>0$, see Remark~\ref{rem:invariance}(c).
Moreover, $K$ is a closed convex cone and invariant under $T$ and, hence, also under $\lambda(\lambda-A_T)^{-1}$ for all sufficiently large $\lambda>0$. 

Let $B\in L(L_1(\mu),L_\infty(\mu))$ be defined by 
\[Bu:= \int_\Omega u\,d\mu \cdot \1_{\Omega}.\]
Then $B$ is positive. 

With this setup we can recover the result of \cite[Theorem 2.1]{SeifertWingert2014} by applying Corollary \ref{cor:main}
and noting that testing against elements of $L$ can detect positivity; cf.\ Remark \ref{rem:strong_formulation}.

\subsection{Perturbations of positive semigroups by $L_p$-functions}

Let $(\Omega,\mu)$ be a localizable measure space, $1\leq p,q<\infty$, $X:= L_2(\mu)$, $Y:= L_p(\mu)$, $Z:=L_q(\mu)$, $E:=L_{p'}(\mu)$ with $\tfrac{1}{p}+\tfrac{1}{p'} = 1$, i.e.\ $E=Y'$. 
Note that $X+Y$ is dense in $X$ and in $Y$, hence $(X+Y)'=X'\cap Y'$.

Let $T,S$ be positive $C_0$-semigroups on $L_2(\mu)$ with generators $A_S$, $A_T$, respectively.

Assume there exist $M\geq 1$, $\omega\in\R$ such that
\begin{align*}
 \norm{S(t) u}_p & \leq Me^{\omega t}\norm{u}_p \quad (u\in L_2\cap L_p(\mu), t\geq 0),\\
 \norm{T(t) u}_q & \leq Me^{\omega t}\norm{u}_q \quad (u\in L_2\cap L_q(\mu), t\geq 0).
\end{align*}
Then $S|_{L_2\cap L_p(\mu)}$ extends to a $C_0$-semigroup $S_p$ on $L_p(\mu)$, and $T|_{L_2\cap L_q(\mu)}$ extends to a $C_0$-semigroup $T_q$ on $L_q(\mu)$, such that
$S_p$ is consistent with $S$ and $T_q$ is consistent with $T$ (for the $L_1$-case see \cite[Theorem 7]{Voigt1992}).

Let $L:= \set{f\in L_2(\mu)\cap L_{p'}(\mu);\; f\geq 0} \subseteq (X+Y)'\cap E$ and $K:=\set{f\in L_2(\mu)\cap L_q(\mu);\; f\geq 0} \subseteq X\cap Z$.
Since $L$ is a closed convex cone and invariant under $S'$, it is also invariant under $\lambda R_{S_{L_2+L_p}}'(\lambda)$ for sufficiently large $\lambda>0$, see Remark~\ref{rem:invariance}(b).
Moreover, $K$ is a closed convex cone and invariant under $T$ and hence also under $\lambda(\lambda-A_T)^{-1}$ for all sufficiently large $\lambda>0$, see Remark~\ref{rem:invariance}(a).

Let $0\leq f\in L_p(\mu)$, $0\leq g'\in L_{q'}(\mu)$, where $\tfrac{1}{q}+\tfrac{1}{q'} = 1$, and define $B\in L(L_q(\mu),L_p(\mu))$ by 
\[Bu:= \int_\Omega ug'\,d\mu \cdot f.\]
Then $B$ is positive. With this setup we precisely recover the result of \cite[Theorem 1]{SeifertWaurick2016}.

\subsection{Delay equations}

We apply our result in the context of $C_0$-semigroups for delay equations. For more information concerning delay equations in the semigroup context, see \cite{BatkaiPiazzera2001,BatkaiPiazzera2005}.
  
Let $\Xs$ be a real Banach lattice (the state space), $1\leq p<\infty$. Let $\Phi_p\from W_p^1\bigl((-1,0);\Xs\bigr)\to \Xs$ be linear and bounded.
Let $A_0$ be the generator of a positive $C_0$-semigroup on $\Xs$. Let $\mathcal{A}_{\Phi_p,p}$ in $\mathcal{X}_p:=\Xs\times L_p\bigl((-1,0); \Xs\bigr)$ be defined by
\begin{align*}
  D(\mathcal{A}_{\Phi_p,p}) & := \set{\begin{pmatrix} x\\f\end{pmatrix} \in D(A_0)\times W_p^1\bigl((-1,0);\Xs\bigr);\; f(0) = x},\\
  \mathcal{A}_{\Phi_p,p} & = \begin{pmatrix} 
				A_0 & \Phi_p \\
				0 & \partial
			      \end{pmatrix}.
\end{align*}

Now, let $1\leq q<\infty$ and $\Psi_q\from W_q^1\bigl((-1,0);\Xs\bigr)\to \Xs$ be linear, bounded and positive,
such that $\Phi_p f\leq \Psi_q f$ for all $0\leq f\in W_p^1\bigl((-1,0);\Xs\bigr)\cap W_q^1\bigl((-1,0);\Xs\bigr)$.

For a concrete example let $\eta\from [-1,0]\to L(\Xs)$ be of bounded variation and $\Phi_p f:= \int_{-1}^0 \,d\eta f$.
Furthermore, assume that $d\eta$ is absolutely continuous with respect to Lebesgue measure $\lambda$, 
$\frac{d\eta}{d\lambda}\leq \rho \in L_{q'}\bigl((-1,0);L(\Xs)\bigr)$, where $\frac{1}{q}+\frac{1}{q'}=1$, and let $\Psi_qf:= \int_{-1}^0 \rho(t)f(t)\,dt$.

We now apply the abstract theory with $X=Y:=\mathcal{X}_p$, $Z:=\mathcal{X}_q:=\Xs\times L_q\bigl((-1,0); \Xs\bigr)$, and $E:=Y'$.
Let $S$ be the positive $C_0$-semigroup on $\mathcal{X}_p$ generated by $\mathcal{A}_{0,p} = \begin{pmatrix} 
				A_0 & 0 \\
				0 & \partial
			      \end{pmatrix}$
(with domain $D(A_0)\times  W_p^1\bigl((-1,0);\Xs\bigr)$), and $T$ be the positive $C_0$-semigroup on $\mathcal{X}_p$ generated by $\mathcal{A}_{\Phi_p,p}$. 
Assume that $T$ is exponentially bounded on $\mathcal{X}_q$ and can thus be extended to a consistent $C_0$-semigroup $T_Z$ on $\mathcal{X}_q$.

Let $B:=\begin{pmatrix} 0 & \Psi_q \\ 0 & 0 \end{pmatrix}\in L(\mathcal{X}_q,\mathcal{X}_p)$.
Furthermore, let $\tilde{B}:=\begin{pmatrix} 0 & \Phi_p \\ 0 & 0 \end{pmatrix}$. Then $\mathcal{A}_{\Phi_p,p} = \mathcal{A}_{0,p} + \tilde{B}$, and we have
$\tilde{B}u\leq Bu$ for all $0\leq u\in \mathcal{X}_p\cap \mathcal{X}_q$.

Let $K:=\set{u\in \mathcal{X}_p\cap \mathcal{X}_q;\; u\geq 0}$ and $L:=\set{v'\in \mathcal{X}_p';\; v'\geq 0}$.
Then $K$ is a closed convex cone and invariant under $T$, hence also invariant under $\lambda(\lambda-\mathcal{A}_{\Phi_p,p})^{-1}$ for sufficiently large $\lambda\in\R$.
Furthermore, $L$ is a closed convex cone and invariant under $S'$, hence also invariant under $\lambda(\lambda-\mathcal{A}_{0,p})^{-1}$ for sufficiently large $\lambda\in\R$.

For $u\in K\cap D(\mathcal{A}_{\Phi_p,p})$ and $v'\in L\cap D(\mathcal{A}_{0,p}')$ we obtain
\[\dupa{\mathcal{A}_{\Phi_p,p} u}{v'}_{\mathcal{X}_p\times \mathcal{X}_p'} = \dupa{(\mathcal{A}_{0,p} + \tilde{B}) u}{v'}_{\mathcal{X}_p\times \mathcal{X}_p'} \leq \dupa{u}{\mathcal{A}_{0,p}' v'}_{\mathcal{X}_p\times \mathcal{X}_p'} + \dupa{B u}{v'}_{\mathcal{X}_p\times \mathcal{X}_p'}.\]
By Theorem~\ref{thm:main3}, for $u\in K$ and $v'\in L$ and $t\geq 0$ we observe
\begin{align*}
  \dupa{T(t)u}{v'}_{\mathcal{X}_p\times \mathcal{X}_p'} & \leq \dupa{S(t)u}{v'}_{\mathcal{X}_p\times \mathcal{X}_p'} + \int_0^t \dupa{S(t-s)BT_Z(s)u}{v'}_{\mathcal{X}_p\times \mathcal{X}_p'}\,ds \\
  & = \dupa{S(t)u+ \int_0^t S(t-s)BT_Z(s)u\,ds }{v'}_{\mathcal{X}_p\times \mathcal{X}_p'}.
\end{align*}
Since testing against functionals in $L$ detects positivity in $\mathcal{X}_p$, we obtain
\begin{equation}T(t)u\leq S(t)u+\int_0^t S(t-s)BT_Z(s)u\,ds\label{eq:impl}\end{equation}
for all $u\in K$ and $t\geq 0$.

\begin{remark}
  Of course, the delay semigroup $T$ is given by
  \begin{equation}T(t) = S(t) + \int_0^t S(t-s)\tilde{B}T(s)\,ds \quad (t\geq0).\label{eq:voc}\end{equation}
  We stress that we work with different spaces $\mathcal{X}_p$ and $\mathcal{X}_q$, and our framework deals exactly with that case. In particular, for this reason \eqref{eq:impl} does \emph{not} immediately follow from \eqref{eq:voc}.
\end{remark}

\section*{Acknowledgements}

C.S.\ thanks Jan Meichsner for useful discussions on bi-continuous semigroups.
We also thank J\"urgen Voigt for stimulating discussions on earlier versions of this manuscript.

\noindent
Christian Seifert \\
Technische Universit\"at Hamburg \\
Institut f\"ur Mathematik \\
21073 Hamburg, Germany \\
{\tt christian.se\rlap{\textcolor{white}{hugo@egon}}ifert@tu\rlap{\textcolor{white}{darmstadt}}hh.de}

\medskip

\noindent
Hendrik Vogt \\
Universit\"at Bremen \\
Fachrichtung Mathematik \\
01062 Bremen, Germany \\
{\tt hendrik.vo\rlap{\textcolor{white}{hugo@egon}}gt@uni-\rlap{\textcolor{white}{darmstadt}}bremen.de}

\medskip

\noindent
Marcus Waurick \\
University of Strathclyde\\
Department of Mathematics and Statistics\\
G1 1XH, Scotland\\
{\tt marcus.wau\rlap{\textcolor{white}{hugo@egon}}rick@strath.\rlap{\textcolor{white}{darmstadt}}ac.uk}

\end{document}